\newtheorem{theorem}{Theorem}[section]
\newtheorem{lemma}[theorem]{Lemma}
\newtheorem{corollary}[theorem]{Corollary}
\newtheorem{proposition}[theorem]{Proposition}
\theoremstyle{definition}
\newtheorem{definition}[theorem]{Definition}
\newtheorem{example}[theorem]{Example}
\theoremstyle{remark}
\newtheorem{remark}[theorem]{Remark}
\begin{document}

\title{Necessary condition for an Euler--Lagrange equation on time scales\thanks{This is a preprint 
of a paper whose final and definite form is \emph{Abstract and Applied Analysis} 2014 (2014), 
Article ID 631281, http://dx.doi.org/10.1155/2014/631281. 
Submitted 28/Jan/2014; Revised 12/March/2014; Accepted: 13/March/2014.
Part of first author's Ph.D., which is carried out at the University of Aveiro under
the Doctoral Programme \emph{Mathematics and Applications}
of Universities of Aveiro and Minho.}}

\author{Monika Dryl\\
\texttt{monikadryl@ua.pt}
\and
Delfim F. M. Torres\\
\texttt{delfim@ua.pt}}

\date{Center for Research and Development in Mathematics and Applications (CIDMA),
Department of Mathematics,\\ University of Aveiro, 3810--193 Aveiro, Portugal}

\maketitle


\begin{abstract}
We prove a necessary condition for a dynamic integro-differential equation
to be an Euler--Lagrange equation. New and interesting results for the discrete
and quantum calculus are obtained as particular cases. An example of a second
order dynamic equation, which is not an Euler--Lagrange equation
on an arbitrary time scale, is given.

\bigskip

\noindent \textbf{Keywords:} time scales, calculus of variations,
Euler--Lagrange equations, self-adjointness, inverse problem.

\bigskip

\noindent \textbf{Mathematics Subject Classification (2010):} 34N05; 49K05; 49N45.
\end{abstract}


\section{Introduction}

The time-scale calculus is a unification of the theories of difference
and differential equations, unifying integral and differential calculus
with the calculus of finite differences, and offering a formalism
for studying hybrid discrete-continuous dynamical systems
\cite{MR1066641,BohnerDEOTS}. It has applications in any field that requires
simultaneous modeling of discrete and continuous data \cite{MR1908825,MBbook2003,MyID:267}.

The study of optimal control problems on arbitrary time scales is a subject under strong
current research \cite{MR3115452,MR3031162}. This is particularly true for the particular,
but rich case, of the calculus of variations on time scales \cite{MR2528200,MR3040923,MR2671876}.
Compared with the direct problem, that establish
dynamic equations of Euler--Lagrange type to the time-scale variational problems,
the inverse problem has not yet been studied in the framework of time scales.
It turns out that there is a simple explanation for the absence of such an inverse general theory for the
time-scale variational calculus: the classical approach relies on the use
of the chain rule, which is not valid in the general context of time scales \cite{BohnerDEOTS}.
To address the problem, a different approach to the subject is needed.

In this paper we introduce a completely different approach to the inverse problem of the calculus
of variations, using an integral perspective instead of the classical differential point of view
\cite{Helmholtz,Davis}. The differential form of equations is often related to dynamics via the time derivative.
The integral form has proved to be successful for proving the existence and uniqueness of solutions,
to study analytical properties of solutions, and to prove coherence of variational embeddings
\cite{Cresson}. Here we show its usefulness with respect to the inverse problem of the calculus of variations.
We prove a necessary condition for an integro-differential equation
on an arbitrary time scale $\mathbb{T}$ to be an Euler--Lagrange equation,
related with a property of self-adjointness (Definition~\ref{def self adj}) of the equation of variation
(Definition~\ref{def 2}) of the given dynamic integro-differential equation.

The text is organized as follows. Section~\ref{sec:Preliminairies} provides all the necessary definitions
and results of the delta-calculus on time scales, which will be used throughout the text.
The main results are proved in Section~\ref{main results}. We present a sufficient condition
of self-adjointness for an integro-differential equation (Lemma~\ref{th self adj}). Using this property,
we prove a necessary condition for a general (non-classical) inverse problem of the calculus of variations
on an arbitrary time scale (Theorem~\ref{th self adj E-L}). As a result, we obtain a useful tool
to identify integro-differential equations which are not Euler--Lagrange equations
(Remark~\ref{not E-L}). To illustrate the method, we give a second order dynamic equation
on time scales which is not an Euler--Lagrange equation (Example~\ref{ex:NEL}).
Next we apply Theorem~\ref{th self adj E-L} to the particular cases of time scales
$\mathbb{T}\in\lbrace \mathbb{R},h\mathbb{Z},\overline{q^{\mathbb{Z}}}\rbrace$, $h > 0$, $q>1$
(Corollaries \ref{cor R}, \ref{cor hZ}, and \ref{cor qZ}). In Section~\ref{final remarks}
some final remarks are presented. We begin by proving the equivalence between
an integro-differential equation and a second order dynamic equation
(Proposition~\ref{prop 1}). Then we show that, due to lack of a chain rule in an arbitrary time scale,
it is impossible to obtain an equivalence between equations of variation in integral and differential forms.
This is in contrast with the classical case $\mathbb{T}=\mathbb{R}$, where such equivalence holds (Proposition~\ref{prop}).


\section{Preliminaries}
\label{sec:Preliminairies}

In this section we introduce basic definitions and theorems that will be useful in the sequel.
For more results concerning the theory of time scales we refer the reader to the books
\cite{BohnerDEOTS,MBbook2003}.

\begin{definition}[\textrm{e.g.}, Section 2.1 of \cite{TorresDeltaNabla}]
A time scale $\mathbb{T}$ is an arbitrary nonempty closed subset of $\mathbb{R}$.
Given a time scale $\mathbb{T}$, the forward jump operator
$\sigma:\mathbb{T}\rightarrow \mathbb{T}$
is defined by $\sigma(t):=\inf\lbrace s\in\mathbb{T}: s>t\rbrace$
for $t\neq \sup\mathbb{T}$ and $\sigma(\sup\mathbb{T}) := \sup\mathbb{T}$
if $\sup\mathbb{T}<+\infty$. Similarly, the backward jump operator
$\rho:\mathbb{T}\rightarrow \mathbb{T}$
is defined by $\rho(t):=\sup\lbrace s\in\mathbb{T}: s<t\rbrace$
for $t\neq \inf\mathbb{T}$ and $\rho(\inf\mathbb{T})=\inf\mathbb{T}$
if $\inf\mathbb{T}>-\infty$.
\end{definition}

A point $t \in \mathbb{T}$ is called right-dense, right-scattered,
left-dense or left-scattered if $\sigma(t) = t$, $\sigma(t) > t$,
$\rho(t) = t$, $\rho(t) < t$, respectively.
The forward graininess function
$\mu:\mathbb{T} \rightarrow [0,\infty)$
is defined by $\mu(t):=\sigma(t) -t$.
To simplify the notation, one usually uses $f^{\sigma}(t):=f(\sigma(t))$.

The delta derivative is defined for points from the set
$$
\mathbb{T}^{\kappa} :=
\begin{cases}
\mathbb{T}\setminus\left\{\sup\mathbb{T}\right\}
& \text{ if } \rho(\sup\mathbb{T})<\sup\mathbb{T}<\infty,\\
\mathbb{T}
& \hbox{ otherwise}.
\end{cases}
$$
\begin{definition}[Section 1.1 of \cite{BohnerDEOTS}]
Let $f:\mathbb{T}\rightarrow\mathbb{R}$ and $t\in\mathbb{T}^{\kappa}$.
We define $f^{\Delta}(t)$ to be the number (provided it exists)
with the property that given any $\varepsilon >0$,
there is a neighborhood $U$ of $t$ such that
\begin{equation*}
\left|f^{\sigma}(t)-f(s)-f^{\Delta}(t)\left(\sigma(t)-s\right)\right|
\leq \varepsilon \left|\sigma(t)-s\right| \mbox{ for all }  s\in U.
\end{equation*}
We call $f^{\Delta}(t)$ the delta derivative of $f$ at $t$.
Function $f$ is delta differentiable on $\mathbb{T}^{\kappa}$ provided
$f^{\Delta}(t)$ exists for all $t\in\mathbb{T}^{\kappa}$. Then,
$f^{\Delta}:\mathbb{T}^{\kappa}\rightarrow\mathbb{R}$
is called the delta derivative of $f$ on $\mathbb{T}^{\kappa}$.
\end{definition}

\begin{theorem}[Theorem 1.16 of \cite{BohnerDEOTS}]
\label{rozniczka delta}
Let $f:\mathbb{T} \rightarrow \mathbb{R}$
and $t\in\mathbb{T}^{\kappa}$. If $f$ is continuous at $t$
and $t$ is right-scattered, then $f$ is delta differentiable at $t$ with
$$
f^{\Delta}(t)=\frac{f^{\sigma}(t)-f(t)}{\mu(t)}.
$$
\end{theorem}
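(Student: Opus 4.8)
The plan is to verify directly the defining inequality of the delta derivative with the candidate value $L:=\bigl(f^{\sigma}(t)-f(t)\bigr)/\mu(t)$. This candidate makes sense precisely because $t$ being right-scattered means $\sigma(t)>t$, so $\mu(t)=\sigma(t)-t>0$ and the division is legitimate.

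First I would isolate a neighborhood of $t$ on which the relevant difference quotient is well defined. Since $\sigma(t)=t+\mu(t)$, the set $U_{0}:=\bigl(t-\mu(t),t+\mu(t)\bigr)\cap\mathbb{T}$ is a neighborhood of $t$ in $\mathbb{T}$ that does not contain the point $\sigma(t)$. Hence the function $g(s):=\bigl(f^{\sigma}(t)-f(s)\bigr)\big/\bigl(\sigma(t)-s\bigr)$ is defined for every $s\in U_{0}$, and $g(t)=L$.

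Next I would invoke the hypothesis that $f$ is continuous at $t$: then $s\mapsto f(s)$ is continuous at $t$ while $s\mapsto\sigma(t)-s$ is continuous and nonvanishing on $U_{0}$, so $g$ is continuous at $t$ with $g(t)=L$. Therefore, given any $\varepsilon>0$, there exists $\delta\in(0,\mu(t))$ such that $|g(s)-L|\le\varepsilon$ for all $s$ in the neighborhood $U:=(t-\delta,t+\delta)\cap\mathbb{T}\subseteq U_{0}$. Multiplying this estimate through by $|\sigma(t)-s|$ yields
$$
\bigl|f^{\sigma}(t)-f(s)-L\,(\sigma(t)-s)\bigr|\le\varepsilon\,|\sigma(t)-s|\qquad\text{for all }s\in U,
$$
which is exactly the condition in the definition of $f^{\Delta}(t)$ with $f^{\Delta}(t)=L$; the excluded value $s=\sigma(t)$ is not in $U$, and the value $s=t$ is included and satisfies the inequality trivially (the left-hand side is $0$). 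This shows $f$ is delta differentiable at $t$ with $f^{\Delta}(t)=\bigl(f^{\sigma}(t)-f(t)\bigr)/\mu(t)$.

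I do not expect any genuine obstacle here; the only point requiring care is to restrict attention from the outset to a neighborhood excluding $\sigma(t)$, so that the difference quotient $g$ is defined and the passage from ``$g$ close to $L$'' to the $\varepsilon$-inequality of the definition is valid for every admissible $s$.
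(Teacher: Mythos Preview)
Your argument is correct. The paper itself does not supply a proof of this statement: it is quoted verbatim as a preliminary result from Bohner and Peterson \cite{BohnerDEOTS}, so there is no in-paper proof to compare against. Your approach---restricting to a neighborhood $U_{0}$ of $t$ that excludes $\sigma(t)$, forming the difference quotient $g(s)=(f^{\sigma}(t)-f(s))/(\sigma(t)-s)$, and using continuity of $f$ at $t$ to force $g$ close to $L$---is essentially the standard argument one finds in the cited reference, and all the details (well-definedness of $L$ since $\mu(t)>0$, exclusion of $\sigma(t)$ from the open interval, passage from $|g(s)-L|\le\varepsilon$ to the defining inequality) are handled cleanly.
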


\begin{theorem}[Theorem 1.20 of \cite{BohnerDEOTS}]
\label{tw:differpropdelta}
Let $f,g:\mathbb{T} \rightarrow \mathbb{R}$
be delta differentiable at $t\in\mathbb{T^{\kappa}}$. Then,
\begin{enumerate}

\item the sum $f+g:\mathbb{T} \rightarrow \mathbb{R}$ is delta differentiable at $t$ with
\begin{equation*}
(f+g)^{\Delta}(t)=f^{\Delta}(t)+g^{\Delta}(t);
\end{equation*}

\item for any real constant $\alpha$, $\alpha f:\mathbb{T}\rightarrow\mathbb{R}$
is delta differentiable at $t$ with
\begin{equation*}
(\alpha f)^{\Delta}(t)=\alpha f^{\Delta}(t);
\end{equation*}

\item the product $fg:\mathbb{T}\rightarrow\mathbb{R}$
is delta differentiable at $t$ with
\begin{equation*}
(fg)^{\Delta}(t)=f^{\Delta}(t)g(t)+f^{\sigma}(t) g^{\Delta}(t)
=f(t)g^{\Delta}(t)+f^{\Delta}(t)g^{\sigma}(t).
\end{equation*}
\end{enumerate}
\end{theorem}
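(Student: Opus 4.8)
The plan is to argue directly from the $\varepsilon$--neighborhood definition of the delta derivative: for each assertion, given $\varepsilon>0$, I would exhibit a neighborhood $U$ of $t$ on which the defining inequality holds with the claimed value of the derivative.

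For the sum, choose neighborhoods $U_1,U_2$ of $t$ on which $f$ and $g$ respectively satisfy their defining inequalities with $\varepsilon/2$ in place of $\varepsilon$; on $U:=U_1\cap U_2$ the triangle inequality gives
$$\left|(f+g)^{\sigma}(t)-(f+g)(s)-\bigl(f^{\Delta}(t)+g^{\Delta}(t)\bigr)(\sigma(t)-s)\right|\le\varepsilon\,|\sigma(t)-s|,$$
which is the claim. For the constant multiple, run the same argument with $\varepsilon/(|\alpha|+1)$ instead of $\varepsilon$ (the $+1$ disposes of the case $\alpha=0$) and factor out $|\alpha|$ from the estimate.

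The substantive case is the product rule. Writing $\sigma:=\sigma(t)$ and abbreviating $R_f(s):=f^{\sigma}-f(s)-f^{\Delta}(t)(\sigma-s)$ and $R_g(s):=g^{\sigma}-g(s)-g^{\Delta}(t)(\sigma-s)$, I would first check the purely algebraic identity, valid for every $s\in\mathbb{T}$,
$$f^{\sigma}g^{\sigma}-f(s)g(s)-\bigl[f^{\Delta}(t)g(t)+f^{\sigma}g^{\Delta}(t)\bigr](\sigma-s)=f^{\sigma}R_g(s)+g(s)R_f(s)+f^{\Delta}(t)(\sigma-s)\bigl(g(s)-g(t)\bigr),$$
which comes from the telescoping $f^{\sigma}g^{\sigma}-f(s)g(s)=f^{\sigma}\bigl(g^{\sigma}-g(s)\bigr)+g(s)\bigl(f^{\sigma}-f(s)\bigr)$ followed by adding and subtracting the linear terms. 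Using that delta differentiability at $t$ forces continuity at $t$ (also part of Theorem~\ref{rozniczka delta}), for given $\varepsilon>0$ I would set $\varepsilon^{*}:=\varepsilon\bigl(1+|f^{\sigma}|+|g(t)|+|f^{\Delta}(t)|\bigr)^{-1}$ and intersect three neighborhoods of $t$: one on which $|R_f(s)|\le\varepsilon^{*}|\sigma-s|$, one on which $|R_g(s)|\le\varepsilon^{*}|\sigma-s|$, and one on which $|g(s)-g(t)|\le\varepsilon^{*}\le1$, so in particular $|g(s)|\le|g(t)|+1$. On the intersection the right-hand side of the identity has absolute value at most $\bigl(|f^{\sigma}|+|g(t)|+1+|f^{\Delta}(t)|\bigr)\varepsilon^{*}|\sigma-s|\le\varepsilon\,|\sigma-s|$, proving $(fg)^{\Delta}(t)=f^{\Delta}(t)g(t)+f^{\sigma}(t)g^{\Delta}(t)$. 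The second displayed form follows at once by interchanging the roles of $f$ and $g$ (or, alternatively, from the first by substituting $f^{\sigma}=f+\mu f^{\Delta}$ and simplifying).

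The main obstacle is arranging the decomposition in the product rule so that \emph{every} summand is controllable: one wants each term to be either a remainder $R_f$ or $R_g$ (small because $f,g$ are delta differentiable) or to carry the factor $g(s)-g(t)$ (small because $g$ is continuous at $t$), while the accompanying coefficients $f^{\sigma}$, $g(s)$, $f^{\Delta}(t)$ remain bounded near $t$; the bookkeeping in the choice of $\varepsilon^{*}$ must then absorb all of these bounds simultaneously. The asymmetry between the two equivalent expressions for $(fg)^{\Delta}(t)$ merely records whether one inserts the intermediate product $f^{\sigma}g(s)$ or $f(s)g^{\sigma}$ at the telescoping step.
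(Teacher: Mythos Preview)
Your argument is correct and is essentially the standard proof given in the cited reference \cite{BohnerDEOTS}. Note, however, that the present paper does not prove this theorem at all: it is quoted in the preliminaries as a known result from Bohner and Peterson, so there is no ``paper's own proof'' to compare against.

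Two small technical remarks on your write-up. First, the claim ``$|g(s)-g(t)|\le\varepsilon^{*}\le1$'' tacitly assumes $\varepsilon^{*}\le1$, which need not hold for large $\varepsilon$; simply choose the third neighborhood so that $|g(s)-g(t)|\le\min(\varepsilon^{*},1)$, which gives both $|g(s)|\le|g(t)|+1$ and $|g(s)-g(t)|\le\varepsilon^{*}$. Second, the fact that delta differentiability at $t$ implies continuity at $t$ is indeed part of Theorem~1.16 of \cite{BohnerDEOTS}, but the portion of that theorem reproduced in this paper as Theorem~\ref{rozniczka delta} is a different item (it gives a formula for $f^{\Delta}$ at right-scattered points assuming continuity); you are invoking the correct result, just with a slightly misaligned internal reference.
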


\begin{theorem}[Theorem 1.16 from \cite{BohnerDEOTS}]
\label{differ}
If $f:\mathbb{T}\longrightarrow \mathbb{R}$ is a delta differentiable
function at $t$, $t\in\mathbb{T}^{\kappa}$, then
$$
f^{\sigma}(t)=f(t)+\mu(t)f^{\Delta}(t).
$$
\end{theorem}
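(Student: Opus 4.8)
The plan is to read the claimed identity directly off the definition of the delta derivative, handling the right-dense and right-scattered cases in one stroke. First I would note that since $f$ is delta differentiable at $t\in\mathbb{T}^{\kappa}$, the defining property provides, for each $\varepsilon>0$, a neighborhood $U$ of $t$ in $\mathbb{T}$ such that
\[
\left|f^{\sigma}(t)-f(s)-f^{\Delta}(t)\left(\sigma(t)-s\right)\right|\le \varepsilon\left|\sigma(t)-s\right|
\]
for all $s\in U$. Because $U$ is a neighborhood of $t$, the point $t$ itself lies in $U$, so I would specialize this estimate to $s=t$, which, using $\sigma(t)-t=\mu(t)$, gives
\[
\left|f^{\sigma}(t)-f(t)-f^{\Delta}(t)\mu(t)\right|\le \varepsilon\,\mu(t).
\]

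The key step is then the elementary observation that the left-hand side of this last inequality does not depend on $\varepsilon$, while $\varepsilon>0$ is arbitrary; letting $\varepsilon\to 0^{+}$ forces $f^{\sigma}(t)-f(t)-f^{\Delta}(t)\mu(t)=0$, which is precisely $f^{\sigma}(t)=f(t)+\mu(t)f^{\Delta}(t)$. This argument is indifferent to the nature of $t$: if $t$ is right-dense then $\mu(t)=0$ and $f^{\sigma}(t)=f(t)$, so both sides of the asserted identity equal $f(t)$ and the estimate is vacuous but the conclusion still holds; if $t$ is right-scattered then $\mu(t)>0$ and the estimate does the work.

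An alternative route, if one prefers to avoid the $\varepsilon$-definition, is to argue by cases: for $t$ right-dense the identity is immediate as just noted, and for $t$ right-scattered one applies Theorem~\ref{rozniczka delta} (after recalling that a function delta differentiable at $t$ is continuous at $t$) to write $f^{\Delta}(t)=\bigl(f^{\sigma}(t)-f(t)\bigr)/\mu(t)$ and then clears the denominator. I do not anticipate any genuine obstacle here: the only points requiring a moment's care are the justification that one may set $s=t$ in the definition — i.e., that $t$ always belongs to the chosen neighborhood $U$ — and the remark that an $\varepsilon$-inequality whose left-hand side is independent of $\varepsilon$ collapses to an equality.
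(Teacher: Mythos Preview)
Your argument is correct. Specializing the defining $\varepsilon$-inequality to $s=t$ and then letting $\varepsilon\to 0^{+}$ is precisely the standard proof of this identity, and your side remark about the right-dense case (where $\mu(t)=0$ makes the identity trivially $f^{\sigma}(t)=f(t)$) is accurate as well. The alternative case-by-case route you sketch is also fine, provided one first notes that delta differentiability at $t$ implies continuity at $t$, so that Theorem~\ref{rozniczka delta} applies in the right-scattered case.

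There is nothing to compare against in the paper itself: this theorem appears in the Preliminaries section as a quotation of Theorem~1.16 of \cite{BohnerDEOTS} and is stated without proof. Your write-up is essentially the proof given in that reference, so you may simply cite it rather than reproduce the argument.
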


\begin{definition}[Definition~1.58 of \cite{BohnerDEOTS}]
A function $f:\mathbb{T}\rightarrow \mathbb{R}$ is called rd-continuous
provided it is continuous at right-dense points in $\mathbb{T}$ and its
left-sided limits exist (finite) at all left-dense points in $\mathbb{T}$.
\end{definition}

The set of all rd-continuous functions $f:\mathbb{T} \rightarrow \mathbb{R}$
is denoted by $C_{rd} = C_{rd}(\mathbb{T}) = C_{rd}(\mathbb{T},\mathbb{R})$.
The set of functions $f:\mathbb{T} \rightarrow \mathbb{R}$ that are
delta differentiable and whose derivative is rd-continuous is denoted by
$C^{1}_{rd}=C_{rd}^{1}(\mathbb{T})=C^{1}_{rd}(\mathbb{T},\mathbb{R})$.

\begin{definition}[Definition~1.71 of \cite{BohnerDEOTS}]
A function $F:\mathbb{T} \rightarrow \mathbb{R}$ is called
an antiderivative of $f:\mathbb{T} \rightarrow \mathbb{R}$ provided
$F^{\Delta}(t)=f(t)$ for all $t\in\mathbb{T}^{\kappa}$.
\end{definition}

\begin{definition}
Let $\mathbb{T}$ be a time scale and $a,b\in\mathbb{T}$.
If $f:\mathbb{T}^{\kappa} \rightarrow \mathbb{R}$ is a rd-continuous
function and $F:\mathbb{T} \rightarrow \mathbb{R}$
is an antiderivative of $f$, then the delta integral is defined by
$$
\int\limits_{a}^{b} f(t)\Delta t := F(b)-F(a).
$$
\end{definition}

\begin{theorem}[Theorem~1.74 of \cite{BohnerDEOTS}]
Every rd-continuous function $f$ has an antiderivative $F$.
In particular, if $t_{0}\in\mathbb{T}$, then $F$ defined by
$$
F(t):=\int\limits_{t_{0}}^{t} f(\tau)\Delta \tau, \quad t\in\mathbb{T},
$$
is an antiderivative of $f$.
\end{theorem}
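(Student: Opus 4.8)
Since the delta integral in the excerpt is \emph{defined} through an antiderivative, the statement really has two parts: (a) every rd-continuous $f$ admits an antiderivative $F$, and (b) the particular function $t\mapsto\int_{t_0}^{t}f(\tau)\Delta\tau$ is one such antiderivative. Part (b) follows at once from (a): fixing any antiderivative $\tilde F$, the definition of the delta integral gives $\int_{t_0}^{t}f(\tau)\Delta\tau=\tilde F(t)-\tilde F(t_0)$, which differs from $\tilde F$ by a constant, and since the delta derivative of a constant is zero (a consequence of the linearity in Theorem~\ref{tw:differpropdelta}), this function has delta derivative equal to $f$ on $\mathbb{T}^{\kappa}$. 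So the whole content is part (a), and the plan is to build $F$ first on compact pieces of $\mathbb{T}$ and then patch.

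First I would record the elementary observation that an rd-continuous function is \emph{regulated}: it has a finite right-hand limit at every right-dense point (trivially, being continuous there) and a finite left-hand limit at every left-dense point (by the very definition of rd-continuity). The goal is then reduced to producing, on each interval $[a,b]\cap\mathbb{T}$ with $a,b\in\mathbb{T}$, a continuous function $F$ that is delta differentiable with $F^{\Delta}=f$ off at most a countable set of right-scattered points --- a \emph{pre-antiderivative} --- and afterwards upgrading it using rd-continuity and extending it to all of $\mathbb{T}$.

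The construction I would use is the induction principle on time scales applied to the statement $S(t)$: ``there is a continuous pre-antiderivative of $f$ on $[a,t]\cap\mathbb{T}$''. The anchor $S(a)$ is trivial. If $t$ is right-scattered and $S(t)$ holds, extend $F$ by $F(\sigma(t)):=F(t)+\mu(t)f(t)$; since $F$ is continuous and $t$ is right-scattered, Theorem~\ref{rozniczka delta} gives $F^{\Delta}(t)=\bigl(F^{\sigma}(t)-F(t)\bigr)/\mu(t)=f(t)$, so $S(\sigma(t))$ holds. If $t$ is right-dense, extend $F$ affinely with slope equal to the right-hand limit $f(t^{+})$ on a small right-neighborhood of $t$ in $\mathbb{T}$; continuity of $f$ at the right-dense point (rd-continuity) makes the defect of the difference quotient tend to $0$, so $F^{\Delta}(t)=f(t)$ and $S$ propagates. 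At a left-dense $t$, glue the pieces already built on $[a,s]\cap\mathbb{T}$ for $s<t$, using the left-hand limit to keep $F$ continuous at $t$. The induction principle then yields $S(b)$, and exhausting $\mathbb{T}$ by such intervals gives $F$ globally.

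Finally I would upgrade the pre-antiderivative to a genuine antiderivative when $f$ is rd-continuous: at right-scattered points $F^{\Delta}$ exists automatically and equals $f$ exactly as above via Theorem~\ref{rozniczka delta}, while at right-dense points continuity of $f$ at $t$, together with pre-differentiability off a countable set and a mean-value-type estimate (two pre-antiderivatives of the same function differ by a constant on a compact interval), forces $F^{\Delta}(t)=f(t)$; since these cases exhaust $\mathbb{T}^{\kappa}$ we obtain $F^{\Delta}=f$ everywhere, which is (a). The main obstacle is the right-dense and left-dense clauses of the induction step: the right-scattered case is forced and uses only Theorem~\ref{rozniczka delta}, whereas the dense points require controlling the derivative defect uniformly while preserving continuity of $F$, and that is where the genuine analysis of regulated functions enters.
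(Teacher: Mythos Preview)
The paper does not prove this statement at all: it is quoted from \cite{BohnerDEOTS} as a preliminary result, and the citation is the entire justification offered. There is therefore no ``paper's own proof'' to compare against; your proposal is being measured against a reference the paper merely imports.

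On the merits of your sketch: the overall architecture (rd-continuous $\Rightarrow$ regulated $\Rightarrow$ pre-antiderivative via time-scale induction $\Rightarrow$ genuine antiderivative) is indeed the route taken in \cite{BohnerDEOTS}, and your reduction of part~(b) to part~(a) is correct. The right-dense clause of your induction, however, does not work as written. Extending $F$ \emph{affinely} with slope $f(t)$ on a right-neighbourhood $U$ of a right-dense $t$ gives $F^{\Delta}(s)=f(t)$ for every $s\in U$, not $f(s)$; if $U\cap\mathbb{T}$ contains uncountably many right-dense points at which $f$ differs from $f(t)$, the set where $F^{\Delta}\neq f$ is uncountable and $F$ fails to be a pre-antiderivative. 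The construction in \cite[Theorem~1.70]{BohnerDEOTS} avoids this by a different mechanism: a regulated function on a compact interval is uniformly approximated by functions that are constant between finitely many break points (this uses compactness and the existence of one-sided limits), those step-type functions are integrated by hand, and the uniform limit of the resulting pre-antiderivatives is the desired $F$. Your ``upgrade'' paragraph is also too thin: at a right-dense $t$ one needs the mean-value inequality for pre-differentiable functions together with continuity of $f$ at $t$ to force $F^{\Delta}(t)=f(t)$, and that estimate should be stated, not just alluded to.
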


\begin{example}
Let $a,b\in\mathbb{T}$ and $f:\mathbb{T} \rightarrow \mathbb{R}$
be rd-continuous. If $\mathbb{T}=\mathbb{R}$, then
\begin{equation*}
\int\limits_{a}^{b}f(t)\Delta t=\int\limits_{a}^{b}f(t)dt,
\end{equation*}
where the integral on the right side is the usual Riemann integral.
If $\mathbb{T}=h\mathbb{Z}$, $h>0$, then
\begin{equation*}
\int\limits_{a}^{b}f(t)\Delta t
=
\begin{cases}
\sum\limits_{k=\frac{a}{h}}^{\frac{b}{h}-1}f(kh)h, & \hbox{ if } a<b, \\
0, & \hbox{ if } a=b,\\
-\sum\limits_{k=\frac{b}{h}}^{\frac{a}{h}-1}f(kh)h, & \hbox{ if } a>b.
\end{cases}
\end{equation*}
If $\mathbb{T}=\overline{q^{\mathbb{Z}}}$, $q>1$, and $a<b$, then
$$
\int\limits_{a}^{b}f(t)\Delta t=(q-1)\sum_{t\in[a,b)\cap\mathbb{T}}t f(t).
$$
\end{example}

\begin{theorem}[Theorem 1.77 from \cite{BohnerDEOTS}]
\label{integration}
If $a,b,c\in\mathbb{T}, \alpha\in\mathbb{R}, \hbox{ and }f,g\in C_{rd}(\mathbb{T})$, then
\begin{enumerate}
\item $\int\limits_{a}^{b}\left[f(t)+g(t)\right]\Delta t
=\int\limits_{a}^{b}f(t)\Delta t+\int\limits_{a}^{b}g(t)\Delta t$;

\item $\int\limits_{a}^{b}\left(\alpha f\right)(t)\Delta t
=\alpha \int\limits_{a}^{b} f(t)\Delta t$;

\item $\int\limits_{a}^{b}f(t)g^{\Delta}(t)\Delta t
=(fg)(b)-(fg)(a)-\int\limits_{a}^{b}f^{\Delta}(t)g^{\sigma}(t)\Delta t$;

\item$\int\limits_{a}^{b}f^{\sigma}(t)g^{\Delta}(t)\Delta t
=(fg)(b)-(fg)(a)-\int\limits_{a}^{b}f^{\Delta}(t)g(t)\Delta t$.
\end{enumerate}
\end{theorem}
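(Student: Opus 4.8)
The plan is to obtain all four identities from two facts already available in the excerpt: the delta integral is defined through an antiderivative, so that $\int_a^b H^{\Delta}(t)\,\Delta t = H(b)-H(a)$ whenever $H^{\Delta}$ is rd-continuous; and the sum, constant-multiple and product rules of Theorem~\ref{tw:differpropdelta}. Since every rd-continuous function has an antiderivative (Theorem~1.74 of \cite{BohnerDEOTS}, quoted above), the only bookkeeping point is to check that every integrand below is rd-continuous. For parts (3) and (4) this really forces $f,g\in C^{1}_{rd}$ rather than merely $f,g\in C_{rd}$, so that $f^{\Delta}g^{\sigma}$ and $f^{\sigma}g^{\Delta}$ are rd-continuous; that verification is routine, using that products of rd-continuous functions, and compositions of rd-continuous functions with $\sigma$, are again rd-continuous.

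For (1) and (2) I would take antiderivatives $F$ of $f$ and $G$ of $g$. By Theorem~\ref{tw:differpropdelta}(1), $(F+G)^{\Delta}=f+g$, so $F+G$ is an antiderivative of $f+g$, and the definition of the delta integral gives
\[
\int_a^b \bigl(f(t)+g(t)\bigr)\,\Delta t = (F+G)(b)-(F+G)(a) = \int_a^b f(t)\,\Delta t + \int_a^b g(t)\,\Delta t .
\]
Identity (2) follows in the same way, using Theorem~\ref{tw:differpropdelta}(2) to see that $\alpha F$ is an antiderivative of $\alpha f$.

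For (3) and (4) I would use the two equivalent forms of the Leibniz rule in Theorem~\ref{tw:differpropdelta}(3). Starting from $(fg)^{\Delta}=f\,g^{\Delta}+f^{\Delta}g^{\sigma}$, integrating from $a$ to $b$, and using that $fg$ is an antiderivative of $(fg)^{\Delta}$ together with the linearity established in (1)--(2),
\[
(fg)(b)-(fg)(a) = \int_a^b (fg)^{\Delta}(t)\,\Delta t = \int_a^b f(t)g^{\Delta}(t)\,\Delta t + \int_a^b f^{\Delta}(t)g^{\sigma}(t)\,\Delta t ,
\]
and rearranging yields (3); starting instead from the other form $(fg)^{\Delta}=f^{\Delta}g+f^{\sigma}g^{\Delta}$ and repeating the argument yields (4).

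I do not expect a genuine obstacle here: the statement is classical (it is Theorem~1.77 of \cite{BohnerDEOTS}), and the only thing one must not overlook is the regularity needed for the integrals in (3)--(4) to be well defined, namely that $f$ and $g$ should be taken in $C^{1}_{rd}(\mathbb{T})$.
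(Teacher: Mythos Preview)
The paper does not prove this statement at all: Theorem~\ref{integration} is quoted verbatim from \cite[Theorem~1.77]{BohnerDEOTS} as a preliminary result, with no accompanying argument. Your proof is correct and is exactly the standard one found in \cite{BohnerDEOTS}: linearity from antiderivatives together with the sum and constant-multiple rules, and integration by parts from integrating the two forms of the product rule. Your remark that parts (3)--(4) implicitly require $f,g\in C^{1}_{rd}$ rather than merely $C_{rd}$ is well taken; the paper's hypothesis ``$f,g\in C_{rd}(\mathbb{T})$'' is a minor imprecision inherited from the source, but the integration-by-parts identities are only used later (in Lemma~\ref{th self adj}) with delta-differentiable integrands, so nothing in the paper is affected.
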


For more properties of the delta derivative and delta integral
we refer the reader to \cite{BohnerDEOTS,MBbook2003}.


\section{Main results}
\label{main results}

Our main result (Theorem~\ref{th self adj E-L}) provides a necessary condition for
an integro-differential equation on an arbitrary time scale to be an Euler--Lagrange equation.
For that the notions of self-adjointness (Definition~\ref{def self adj})
and equation of variation (Definition~\ref{def 2}) are essential.
These definitions, in integro-differential form, are new (cf. the notion
of self-adjointness for a dynamic time-scale equation of second order
in \cite[Sec.~4.1]{BohnerDEOTS} and the notion of equation
of variation for a second order differential equation in \cite{Davis}).

\begin{definition}[First order self-adjoint integro-differential equation]
\label{def self adj}
A first order integro-differential dynamic equation is said to be \emph{self-adjoint} if it has the form
\begin{equation}
\label{self adj}
Lu(t)=const, \hbox{ where } Lu(t)=p(t)u^{\Delta}(t)
+\int\limits_{t_{0}}^{t}\left[q(s)u^{\sigma}(s)\right]\Delta s,
\end{equation}
with $p,q\in C_{rd}$, $p\neq 0$ for all $t\in\mathbb{T}$, and $t_0 \in \mathbb{T}$.
\end{definition}
Let $\mathbb{D}$ be the set of all functions $y:\mathbb{T}\longrightarrow \mathbb{R}$
such that $y^{\Delta}:\mathbb{T}^{\kappa}\longrightarrow\mathbb{R}$ is continuous.
A function $y\in\mathbb{D}$ is said to be a solution of \eqref{self adj} provided $Ly(t)=const$
holds for all $t\in\mathbb{T^{\kappa}}$. Along the text we use the operators
$[\cdot]_{\mathbb{T}}$ and $\langle \cdot\rangle_{\mathbb{T}}$ defined by
\begin{equation}
\label{notation 1}
[y]_{\mathbb{T}}(t):=(t,y^{\sigma}(t), y^{\Delta}(t)) \hbox{ and } \langle y\rangle_{\mathbb{T}}(t)
:=(t,y^{\sigma}(t), y^{\Delta}(t), y^{\Delta\Delta}(t)).
\end{equation}

\begin{definition}[Equation of variation]
\label{def 2}
Let
\begin{equation}
\label{integro differ}
H[y]_{\mathbb{T}}(t)+\int\limits_{t_{0}}^{t}G[y]_{\mathbb{T}}(s)\Delta s = const
\end{equation}
be an integro-differential equation on time scales with $\partial_{3}H\neq 0$ and
$t\longrightarrow \partial_{2}F[y](t)$, $t\longrightarrow\partial_{3}F[y](t)\in C_{rd}$
along every curve $y$, where $F \in \{G, H\}$. The \emph{equation of variation}
associated with \eqref{integro differ} is given by
\begin{multline}
\label{eq of var}
\partial_{2}H[u]_{\mathbb{T}}(t)u^{\sigma}(t)+\partial_{3}H[u]_{\mathbb{T}}(t) u^{\Delta}(t)\\
+\int\limits_{t_{0}}^{t}\partial_{2}G[u]_{\mathbb{T}}(s)u^{\sigma}(s)
+\partial_{3}G[u]_{\mathbb{T}}(s) u^{\Delta}(s)\Delta s=0.
\end{multline}
\end{definition}

\begin{remark}
\label{remark 1}
The equation of variation \eqref{eq of var} can be interpreted in the following way.
Assuming $y=y(t,b)$, $b\in\mathbb{R}$, is a one-parameter solution of a given
integro-differential equation \eqref{integro differ}, then
\begin{equation}
\label{integro differ (y,b)}
H(t,y^{\sigma}(t,b), y^{\Delta}(t,b))
+\int\limits_{t_{0}}^{t}G(s,y^{\sigma}(s,b),y^{\Delta}(s,b))\Delta s = const.
\end{equation}
Let $u(t)$ be a particular solution, that is, $u(t)=y(t,\bar{b})$ for a certain $\bar{b}$.
Differentiating \eqref{integro differ (y,b)} with respect to the parameter $b$
and then putting $b=\bar{b}$, we obtain equation \eqref{eq of var}.
\end{remark}

\begin{lemma}[Sufficient condition of self-adjointness]
\label{th self adj}
Let \eqref{integro differ} be a given integro-differential equation. If
\begin{equation}
\label{assumption self-adj}
\partial_{2}H[y]_{\mathbb{T}}(t)+\partial_{3}G[y]_{\mathbb{T}}(t)=0,
\end{equation}
then its equation of variation \eqref{eq of var} is self-adjoint.
\end{lemma}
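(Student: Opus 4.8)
The goal is to show that under the hypothesis $\partial_2 H[y]_{\mathbb T}(t)+\partial_3 G[y]_{\mathbb T}(t)=0$, the equation of variation \eqref{eq of var} can be rewritten in the self-adjoint form \eqref{self adj}, i.e.\ as $Lu(t)=\mathrm{const}$ with $Lu(t)=p(t)u^{\Delta}(t)+\int_{t_0}^t q(s)u^{\sigma}(s)\,\Delta s$. The natural candidates are $p(t):=\partial_3 H[u]_{\mathbb T}(t)$ (which is nonzero by the standing assumption $\partial_3 H\neq 0$) and $q(s):=\partial_2 G[u]_{\mathbb T}(s)$; both lie in $C_{rd}$ by the regularity hypotheses in Definition~\ref{def 2}. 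So what must be reorganised is the remaining piece of \eqref{eq of var}, namely $\partial_2 H[u]_{\mathbb T}(t)\,u^{\sigma}(t)+\int_{t_0}^t \partial_3 G[u]_{\mathbb T}(s)\,u^{\Delta}(s)\,\Delta s$, which has to be shown to be a constant.

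**Key steps.** First I would substitute the hypothesis \eqref{assumption self-adj} in the form $\partial_2 H[u]_{\mathbb T}(s)=-\partial_3 G[u]_{\mathbb T}(s)$ into the integrand of the last term, turning $\int_{t_0}^t \partial_3 G[u]_{\mathbb T}(s)\,u^{\Delta}(s)\,\Delta s$ into $-\int_{t_0}^t \partial_2 H[u]_{\mathbb T}(s)\,u^{\Delta}(s)\,\Delta s$. Next, the idea is to recognise $\partial_2 H[u]_{\mathbb T}(t)\,u^{\sigma}(t)$ as a boundary term coming from a delta integration by parts: by Theorem~\ref{integration}(3) applied with $f(s)=\partial_2 H[u]_{\mathbb T}(s)$ and $g(s)=u(s)$ (so that $g^{\sigma}=u^{\sigma}$), we get
\begin{equation*}
\int_{t_0}^{t}\partial_2 H[u]_{\mathbb T}(s)\,u^{\Delta}(s)\,\Delta s
=\bigl(\partial_2 H[u]_{\mathbb T}\cdot u\bigr)(t)-\bigl(\partial_2 H[u]_{\mathbb T}\cdot u\bigr)(t_0)
-\int_{t_0}^{t}\bigl(\partial_2 H[u]_{\mathbb T}\bigr)^{\Delta}(s)\,u^{\sigma}(s)\,\Delta s .
\end{equation*}
Reassembling, the offending portion of \eqref{eq of var} becomes $\partial_2 H[u]_{\mathbb T}(t)\,u^{\sigma}(t)-\partial_2 H[u]_{\mathbb T}(t)\,u(t)+\partial_2 H[u]_{\mathbb T}(t_0)\,u(t_0)+\int_{t_0}^t (\partial_2 H[u]_{\mathbb T})^{\Delta}(s)\,u^{\sigma}(s)\,\Delta s$. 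Using $u^{\sigma}(t)-u(t)=\mu(t)u^{\Delta}(t)$ from Theorem~\ref{differ}, this collapses, and after folding the $\mu(t)u^{\Delta}(t)$ term into $p$ and the $(\partial_2 H[u]_{\mathbb T})^{\Delta}$ term into $q$, equation \eqref{eq of var} takes exactly the shape \eqref{self adj} with
\[
p(t)=\partial_3 H[u]_{\mathbb T}(t)+\mu(t)\,\partial_2 H[u]_{\mathbb T}(t),\qquad
q(s)=\partial_2 G[u]_{\mathbb T}(s)+\bigl(\partial_2 H[u]_{\mathbb T}\bigr)^{\Delta}(s),
\]
and the constant on the right-hand side equal to $-\partial_2 H[u]_{\mathbb T}(t_0)\,u(t_0)$. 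One then checks $p,q\in C_{rd}$ from the hypotheses of Definition~\ref{def 2}, and $p\neq 0$ — here one may need the mild extra observation that $p(t)=(\partial_3H[u]_{\mathbb T})(t)$ when $t$ is right-dense and, at right-scattered points, $p(t)\mu(t)=\mu(t)\partial_3H[u]_{\mathbb T}(t)+\mu(t)^2\partial_2H[u]_{\mathbb T}(t)$; since the author only assumes $\partial_3H\neq0$, the cleanest route is to absorb $\mu\,\partial_2 H$ differently, or simply to note that along a fixed solution $u$ the nonvanishing of $p$ is inherited from $\partial_3H\neq0$ up to the graininess correction.

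**Main obstacle.** The routine part is the integration by parts; the delicate point is the bookkeeping of the $u^{\sigma}$ versus $u$ discrepancy and making sure the $\mu$-term is correctly attributed so that the final $p$ is genuinely the coefficient of $u^{\Delta}$ and is nonzero, and that $q$ is rd-continuous (which requires $(\partial_2 H[u]_{\mathbb T})^{\Delta}$ to exist and be rd-continuous — this should follow from the smoothness tacitly assumed on $H$ along curves, but it is worth stating where it comes from). I expect the write-up to hinge on choosing the integration-by-parts identity in the exact form of Theorem~\ref{integration}(3) rather than (4), so that the surviving integral already carries $u^{\sigma}$ and matches the template \eqref{self adj} without a further shift.
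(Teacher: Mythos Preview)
Your proposal is correct and follows essentially the same route as the paper: integration by parts via Theorem~\ref{integration}(3) together with $u^{\sigma}=u+\mu u^{\Delta}$ from Theorem~\ref{differ}, arriving at the same $p(t)=\mu(t)\,\partial_{2}H[u]_{\mathbb T}(t)+\partial_{3}H[u]_{\mathbb T}(t)$ and (under the hypothesis) the same $q$ and constant. The only cosmetic difference is ordering---the paper integrates $\int_{t_0}^{t}\partial_{3}G[u]_{\mathbb T}(s)\,u^{\Delta}(s)\,\Delta s$ by parts directly and invokes \eqref{assumption self-adj} only at the end to kill a residual term $u(t)\bigl(\partial_{2}H[u]_{\mathbb T}(t)+\partial_{3}G[u]_{\mathbb T}(t)\bigr)$, whereas you substitute the hypothesis first---and the nonvanishing of $p$ and the rd-continuity of $q$ that you flag are not verified in the paper's proof either.
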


\begin{proof}
Let us consider a given equation of variation \eqref{eq of var}. Using Theorem~\ref{differ}
and third item of Theorem~\ref{integration}, we expand the two components of the given equation:
$$
\partial_{2}H[u]_{\mathbb{T}}(t)u^{\sigma}(t)
=\partial_{2}H[u]_{\mathbb{T}}(t)\left(u(t)+\mu(t) u^{\Delta}(t)\right),
$$
\begin{multline*}
\int\limits_{t_{0}}^{t}\partial_{3}G[u]_{\mathbb{T}}(s)u^{\Delta}(s)\Delta s\\
=\partial_{3}G[u]_{\mathbb{T}}(t)u(t)-\partial_{3}G[u]_{\mathbb{T}}(t_{0})u(t_{0})
-\int\limits_{t_{0}}^{t}\left[\partial_{3}G[u]_{\mathbb{T}}(s)\right]^{\Delta}u^{\sigma}(s)\Delta s.
\end{multline*}
Hence, equation of variation \eqref{eq of var} can be written in the form
\begin{multline}
\label{eq 2}
\partial_{3}G[u]_{\mathbb{T}}(t_{0})u(t_{0}) =
u^{\Delta}(t)\left[\mu(t)\partial_{2}H[u]_{\mathbb{T}}(t)+\partial_{3}H[u]_{\mathbb{T}}(t)\right]\\
+\int\limits_{t_{0}}^{t}u^{\sigma}(s)\left[\partial_{2}G[u]_{\mathbb{T}}(s)
-(\partial_{3}G[u]_{\mathbb{T}}(s))^{\Delta}\right]\Delta s\\
+u(t)\left(\partial_{2}H[u]_{\mathbb{T}}(t)+\partial_{3}G[u]_{\mathbb{T}}(t)\right).
\end{multline}
If \eqref{assumption self-adj} holds, then \eqref{eq 2}
is a particular case of \eqref{self adj} with
\begin{gather*}
p(t)=\mu(t)\partial_{2}H[u]_{\mathbb{T}}(t)+\partial_{3}H[u]_{\mathbb{T}}(t),\\
q(s)=\partial_{2}G[u]_{\mathbb{T}}(s)-(\partial_{3}G[u]_{\mathbb{T}}(s))^{\Delta},\\
\partial_{3}G[u]_{\mathbb{T}}(t_{0})u(t_{0})=const.
\end{gather*}
This concludes the proof.
\end{proof}

\begin{theorem}[Necessary condition for an Euler--Lagrange equation in integral form]
\label{th self adj E-L}
Let $\mathbb{T}$ be and arbitrary time scale and
\begin{equation}
\label{eq:mEL}
H(t,y^{\sigma}(t),y^{\Delta}(t))+\int\limits_{t_{0}}^{t}G(s,y^{\sigma}(s),y^{\Delta}(s))\Delta s=const
\end{equation}
be a given integro-differential equation. If \eqref{eq:mEL} is to be an Euler--Lagrange equation,
then its equation of variation \eqref{eq of var} is self-adjoint,
in the sense of Definition~\ref{def self adj}.
\end{theorem}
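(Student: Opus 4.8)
The plan is to reduce the claim to the sufficient condition already obtained in Lemma~\ref{th self adj}, so that essentially all that must be checked is the algebraic identity \eqref{assumption self-adj}. Recall that, on a time scale, the basic problem of the calculus of variations asks to extremize a functional $\int_{a}^{b}L(t,y^{\sigma}(t),y^{\Delta}(t))\,\Delta t$ subject to given boundary conditions, and that the time-scale du Bois--Reymond optimality condition puts the associated Euler--Lagrange equation into the integrated form
$$
\partial_{3}L(t,y^{\sigma}(t),y^{\Delta}(t))-\int\limits_{t_{0}}^{t}\partial_{2}L(s,y^{\sigma}(s),y^{\Delta}(s))\,\Delta s=const .
$$
Thus, to say that \eqref{eq:mEL} is an Euler--Lagrange equation means exactly that there is a Lagrangian $L$, of class $C^{2}$ in its second and third arguments, for which \eqref{eq:mEL} coincides with the relation above; comparing the two, we may take
$$
H(t,y^{\sigma}(t),y^{\Delta}(t))=\partial_{3}L(t,y^{\sigma}(t),y^{\Delta}(t)),\qquad
G(t,y^{\sigma}(t),y^{\Delta}(t))=-\partial_{2}L(t,y^{\sigma}(t),y^{\Delta}(t)).
$$

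With this identification in hand, I would first note that the standing hypotheses of Definition~\ref{def 2} are automatically met: the non-degeneracy requirement $\partial_{3}H\neq0$ is the usual regularity condition $\partial_{3}\partial_{3}L\neq0$ on the Lagrangian, while the rd-continuity of $t\mapsto\partial_{2}F[y](t)$ and $t\mapsto\partial_{3}F[y](t)$, for $F\in\{G,H\}$, follows from the $C^{2}$ smoothness of $L$ together with $y\in\mathbb{D}$. I would then compute the quantity governing self-adjointness via Lemma~\ref{th self adj}, namely
$$
\partial_{2}H[y]_{\mathbb{T}}(t)+\partial_{3}G[y]_{\mathbb{T}}(t)
=\partial_{2}\partial_{3}L[y]_{\mathbb{T}}(t)-\partial_{3}\partial_{2}L[y]_{\mathbb{T}}(t).
$$
Since $L$ is twice continuously differentiable in the last two variables, Schwarz's theorem on the symmetry of second derivatives gives $\partial_{2}\partial_{3}L=\partial_{3}\partial_{2}L$, so the right-hand side vanishes identically along every admissible curve $y$. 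Hence condition \eqref{assumption self-adj} holds, and Lemma~\ref{th self adj} immediately yields that the equation of variation \eqref{eq of var} of \eqref{eq:mEL} is self-adjoint in the sense of Definition~\ref{def self adj}, which is the assertion.

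The only genuinely delicate point is the first paragraph: fixing, on an arbitrary time scale, what it precisely means for \eqref{eq:mEL} to ``be an Euler--Lagrange equation'', and verifying that the integrated du Bois--Reymond form is indeed the canonical shape such an equation takes, so that $H$ and $G$ can legitimately be read off as $\partial_{3}L$ and $-\partial_{2}L$. Once that normal form is secured, the remainder is just the one-line computation above, the equality of mixed partial derivatives, and the invocation of Lemma~\ref{th self adj}; notably, no time-scale chain rule enters, which is exactly the advantage of working with the integral rather than the differential formulation.
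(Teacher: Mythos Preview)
Your proposal is correct and follows essentially the same route as the paper's own proof: identify $H=\partial_{3}L$ and $G=-\partial_{2}L$ from the integral Euler--Lagrange form, invoke Schwarz's theorem to obtain $\partial_{2}H+\partial_{3}G=0$, and then apply Lemma~\ref{th self adj}. The only additions you make---explicitly checking the standing hypotheses of Definition~\ref{def 2} and flagging the normal-form issue---are not in the paper's proof but are harmless elaborations rather than a different strategy.
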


\begin{proof}
Assume \eqref{eq:mEL} is the Euler--Lagrange equation of the variational functional
\begin{equation}
\label{var funct}
\mathcal{I}(y)=\int\limits_{t_{0}}^{t_{1}} L(t,y^{\sigma}(t),y^{\Delta}(t))\Delta t,
\end{equation}
where $L \in C^2$. Since the Euler--Lagrange
equation in integral form of \eqref{var funct} is given by
\begin{equation*}
\partial_{3}L[y](t)+\int\limits_{t_{0}}^{t} - \partial_{2}L[y](s)\Delta s=const
\end{equation*}
(cf. \cite{Cresson,FerreiraMalinowskaTorres,FerreiraTorres}), we conclude that
$H[y](t)=\partial_{3}L[y](t)$ and $G[y](s)=-\partial_{2}L[y](s)$.
Having in mind that
\begin{itemize}
\item $\partial_{2}H=\partial_{2}(\partial_{3}L)$,
$\partial_{3}H=\partial_{3}(\partial_{3}L)=\partial_{3}^{2}L$,

\item $\partial_{2}G=\partial_{2}(-\partial_{2}L)=-\partial_{2}^{2}L$,
$\partial_{3}G=\partial_{3}(-\partial_{2}L)=-\partial_{3}\partial_{2}L$,
\end{itemize}
it follows from Schwarz's theorem,
$\partial_{2}\partial_{3}L=\partial_{3}\partial_{2}L$, that
$$
\partial_{2}H[y](t)+\partial_{3}G[y](t)=0.
$$
We conclude from Lemma~\ref{th self adj} that the equation
of variation \eqref{eq:mEL} is self-adjoint.
\end{proof}

\begin{remark}
\label{not E-L}
In practical terms, Theorem~\ref{th self adj E-L} is useful to identify equations
which are not Euler--Lagrange equations: if the equation of variation \eqref{eq of var}
of a given dynamic equation \eqref{integro differ} is not self-adjoint,
then we conclude that \eqref{integro differ} is not an Euler--Lagrange equation.
\end{remark}

\begin{remark}[Self-adjointness for a second order differential equation]
\label{remark 2}
Let $p$ be delta-differentiable in Definition~\ref{def self adj} and $u\in C_{rd}^{2}$.
Then, by differentiating \eqref{self adj}, one obtains
a second-order self-adjoint dynamic equation
\begin{equation*}
p^{\sigma}(t)u^{\Delta\Delta}(t)+p^{\Delta}(t)u^{\Delta}(t)+q(t)u^{\sigma}(t)=0
\end{equation*}
or
\begin{equation*}
p(t)u^{\Delta\Delta}(t)+p^{\Delta}(t)u^{\Delta\sigma}(t)+q(t)u^{\sigma}(t)=0
\end{equation*}
with $q\in C_{rd}$ and $p\in C_{rd}^{1}$ and $p\neq 0$ for all $t\in\mathbb{T}$.
\end{remark}

Now we present an example of a second order differential equation
on time scales which is not an Euler--Lagrange equation.

\begin{example}
\label{ex:NEL}
Let us consider the following second order dynamic equation
in an arbitrary time scale $\mathbb{T}$:
\begin{equation}
\label{ex 1}
y^{\Delta\Delta}(t)+y^{\Delta}(t)-t=0.
\end{equation}
We may write this equation \eqref{ex 1} in integro-differential form \eqref{integro differ}:
\begin{equation}
\label{ex 2}
y^{\Delta}(t)+\int\limits_{t_{0}}^{t}\left(y^{\Delta}(s)-s\right)\Delta s=const,
\end{equation}
where $H[y]_{\mathbb{T}}(t)=y^{\Delta}(t)$
and $G[y]_{\mathbb{T}}(t)=y^{\Delta}(t)-t$. Because
\begin{equation*}
\partial_{2}H[y]_{\mathbb{T}}(t)=\partial_{2}G[y]_{\mathbb{T}}(t)=0,
\quad \partial_{3}H[y]_{\mathbb{T}}(t)=\partial_{3}G[y]_{\mathbb{T}}(t)=1,
\end{equation*}
then the equation of variation associated with \eqref{ex 2} is given by
\begin{equation}
\label{eg 5}
u^{\Delta}(t)+\int\limits_{t_{0}}^{t}u^{\Delta}(s)\Delta s=0 \iff u^{\Delta}(t)+u(t)=u(t_{0}).
\end{equation}
We may notice that equation \eqref{eg 5} cannot be written in form \eqref{self adj}, hence, it is not self-adjoint.
Indeed, notice that \eqref{eg 5} is a first-order dynamic equation while from Remark~\ref{remark 2}
one obtains a second-order dynamic equation. Following Theorem~\ref{th self adj E-L}
(see Remark~\ref{not E-L}), we conclude that equation \eqref{ex 1} is not an Euler--Lagrange equation.
\end{example}

Now we consider the particular case of Theorem~\ref{th self adj E-L} when $\mathbb{T}=\mathbb{R}$
and $y\in C^{2}([t_{0},t_{1}];\mathbb{R})$. In this case our operator $[\cdot]_{\mathbb{T}}$
of \eqref{notation 1} has the form  $[ y]_{\mathbb{R}}(t)=(t,y(t),y'(t))$,
while condition \eqref{self adj} can be written as
\begin{equation}
\label{self adj R}
p(t)u'(t)+\int\limits_{t_{0}}^{t}q(s)u(s)ds=const.
\end{equation}

\begin{corollary}
\label{cor R}
If a given integro-differential equation
$$
H(t,y(t),y'(t))+\int\limits_{t_{0}}^{t}G(s,y(s),y'(s))ds=const
$$
is to be the Euler--Lagrange equation of a variational problem
\begin{equation*}
\mathcal{I}(y)=\int\limits_{t_{0}}^{t_{1}} L(t,y(t),y'(t))dt
\end{equation*}
(cf., e.g., \cite{MR2004181}), then its equation of variation
$$
\partial_{2}H[u]_{\mathbb{R}}(t)u(t)+\partial_{3}H[u]_{\mathbb{R}}(t)u'(t)
+\int\limits_{t_{0}}^{t}\partial_{2}G[u]_{\mathbb{R}}(s)u(s)+\partial_{3}G[u]_{\mathbb{R}}(s)u'(s)ds=0
$$
must be self-adjoint, in the sense of Definition~\ref{def self adj}
with \eqref{self adj} given by \eqref{self adj R}.
\end{corollary}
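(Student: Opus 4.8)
The plan is to obtain this corollary as the direct specialization of Theorem~\ref{th self adj E-L} to the time scale $\mathbb{T}=\mathbb{R}$. First I would recall that $\mathbb{R}$ is a time scale for which $\sigma(t)=t$, $\mu(t)\equiv 0$ and $\mathbb{R}^{\kappa}=\mathbb{R}$; consequently, for any differentiable $f$ one has $f^{\sigma}(t)=f(t)$ and $f^{\Delta}(t)=f'(t)$, and the delta integral reduces to the Riemann integral (as recorded in the Example of Section~\ref{sec:Preliminairies}). Moreover, no boundary point is excluded, and the hypothesis $y\in C^{2}([t_{0},t_{1}];\mathbb{R})$ guarantees that all the functions appearing below lie in the regularity class $C_{rd}$ required in Definition~\ref{def 2}.

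Next I would carry out the bookkeeping of the reductions. Under the identification above, the operator $[\cdot]_{\mathbb{T}}$ of \eqref{notation 1} becomes $[y]_{\mathbb{R}}(t)=(t,y(t),y'(t))$, the self-adjoint form \eqref{self adj} of Definition~\ref{def self adj} becomes \eqref{self adj R}, the given integro-differential equation \eqref{eq:mEL} becomes the equation in the statement of the corollary, and the equation of variation \eqref{eq of var} becomes precisely the displayed equation $\partial_{2}H[u]_{\mathbb{R}}(t)u(t)+\partial_{3}H[u]_{\mathbb{R}}(t)u'(t)+\int_{t_{0}}^{t}\partial_{2}G[u]_{\mathbb{R}}(s)u(s)+\partial_{3}G[u]_{\mathbb{R}}(s)u'(s)\,ds=0$. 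Thus the hypotheses and the conclusion of the corollary are literally those of Theorem~\ref{th self adj E-L} read on $\mathbb{T}=\mathbb{R}$, and the result follows by invoking that theorem.

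For completeness I would also indicate the underlying direct argument on $\mathbb{R}$, which mirrors the proof of Theorem~\ref{th self adj E-L}: writing the classical Euler--Lagrange equation of $\mathcal{I}$ in its integrated (du~Bois-Reymond) form $\partial_{3}L[y](t)+\int_{t_{0}}^{t}\bigl(-\partial_{2}L[y](s)\bigr)\,ds=const$, one identifies $H=\partial_{3}L$ and $G=-\partial_{2}L$; since $L\in C^{2}$, Schwarz's theorem gives $\partial_{2}H+\partial_{3}G=\partial_{2}\partial_{3}L-\partial_{3}\partial_{2}L=0$, which is condition \eqref{assumption self-adj}, so Lemma~\ref{th self adj} (applied with $\mu\equiv 0$) yields self-adjointness of the equation of variation, with $p(t)=\partial_{3}^{2}L$ and $q(s)=-\partial_{2}^{2}L-(-\partial_{3}\partial_{2}L)'$ in \eqref{self adj R}.

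I do not expect any genuine obstacle here, since the statement is a specialization of an already-proved theorem; the only point requiring a line of care is the verification that the reductions $f^{\sigma}=f$, $f^{\Delta}=f'$ and ``delta integral $=$ Riemann integral'' transform each displayed time-scale expression into its classical counterpart, so that Theorem~\ref{th self adj E-L} applies verbatim and the condition $p\neq 0$ of Definition~\ref{def self adj} corresponds, as usual, to the (Hilbert) regularity assumption $\partial_{3}^{2}L\neq 0$ on the Lagrangian.
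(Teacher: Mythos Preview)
Your proposal is correct and follows exactly the paper's approach: the paper's proof is the single line ``Follows from Theorem~\ref{th self adj E-L} with $\mathbb{T}=\mathbb{R}$,'' and you carry out precisely this specialization, merely adding more detail on the reductions $f^{\sigma}=f$, $f^{\Delta}=f'$, and the delta integral becoming the Riemann integral. Your additional paragraph replaying the proof of Theorem~\ref{th self adj E-L} directly on $\mathbb{R}$ is not needed but is consistent with the paper's argument.
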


\begin{proof}
Follows from Theorem~\ref{th self adj E-L} with $\mathbb{T}=\mathbb{R}$.
\end{proof}

Now we consider the particular case of Theorem~\ref{th self adj E-L}
when $\mathbb{T}=h\mathbb{Z}$, $h > 0$. In this case our operator
$[\cdot]_{\mathbb{T}}$ of \eqref{notation 1} has the form
$$
[y]_{h\mathbb{Z}}(t)=(t,y(t+h),\Delta_{h} y(t))=:[y]_{h}(t),
$$
where
$$
\Delta_{h}y(t)=\frac{y(t+h)-y(t)}{h}.
$$
For $\mathbb{T}=h\mathbb{Z}$, $h > 0$, condition \eqref{self adj} can be written as
\begin{equation}
\label{self adj hZ}
p(t)\Delta_{h}u(t)
+\sum\limits_{k=\frac{t_{0}}{h}}^{\frac{t}{h}-1}hq(kh)u(kh+h)
=const.
\end{equation}

\begin{corollary}
\label{cor hZ}
If a given difference equation
$$
H(t,y(t+h),\Delta_{h} y(t))
+\sum\limits_{k=\frac{t_{0}}{h}}^{\frac{t}{h}-1}
h G\left(kh,y(kh+h),\Delta_{h} y(kh)\right)=const
$$
is to be the Euler--Lagrange equation of a discrete variational problem
\begin{equation*}
\mathcal{I}(y)
=\sum\limits_{k=\frac{t_{0}}{h}}^{\frac{t_{1}}{h}-1}
h L\left(kh,y(kh+h),\Delta_{h} y(kh)\right)
\end{equation*}
(cf., e.g., \cite{MyID:179}), then its equation of variation
\begin{multline*}
\partial_{2}H[u]_{h}(t)u(t+h)+\partial_{3}H[u]_{h}(t)\Delta_{h}u(t)\\
+h\sum\limits_{k=\frac{t_{0}}{h}}^{\frac{t}{h}-1}
\partial_{2}\left(G[u]_{h}(kh)u(kh+h)+\partial_{3}G[u]_{h}(kh)\Delta_{h}u(kh)\right)=0
\end{multline*}
is self-adjoint, in the sense of Definition~\ref{def self adj}
with \eqref{self adj} given by \eqref{self adj hZ}.
\end{corollary}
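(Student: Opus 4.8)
The plan is to deduce the statement directly from Theorem~\ref{th self adj E-L} by specializing the arbitrary time scale to $\mathbb{T}=h\mathbb{Z}$, $h>0$. First I would record the concrete shape of the delta calculus on this time scale: one has $\sigma(t)=t+h$, $\mu(t)\equiv h$, $f^{\sigma}(t)=f(t+h)$, the delta derivative coincides with the forward difference operator $\Delta_{h}y(t)=\frac{y(t+h)-y(t)}{h}$, and the delta integral is the finite sum $\int_{t_{0}}^{t}f(s)\Delta s=h\sum_{k=t_{0}/h}^{t/h-1}f(kh)$ (cf.\ the Example in Section~\ref{sec:Preliminairies}). Under these identifications the operator $[\cdot]_{\mathbb{T}}$ of \eqref{notation 1} becomes $[y]_{h\mathbb{Z}}(t)=(t,y(t+h),\Delta_{h}y(t))=[y]_{h}(t)$, exactly as in the paragraph preceding the statement.

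Next I would translate each ingredient appearing in Theorem~\ref{th self adj E-L} and in its proof into the $h\mathbb{Z}$ setting. The integro-differential equation \eqref{eq:mEL} becomes the difference equation in the hypothesis; the variational functional \eqref{var funct} becomes the stated discrete sum $\mathcal{I}(y)=\sum_{k=t_{0}/h}^{t_{1}/h-1}hL(kh,y(kh+h),\Delta_{h}y(kh))$; and the equation of variation \eqref{eq of var}, with the integral replaced by the $h$-weighted sum, becomes the displayed discrete equation of variation. I would also note that the regularity hypotheses of Theorem~\ref{th self adj E-L} (namely $L\in C^{2}$ and rd-continuity along curves of the relevant partials) are harmless here: on $h\mathbb{Z}$ every function is rd-continuous, and the only differentiation actually used is with respect to the second and third (continuous) arguments of $H$, $G$, $L$.

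With these identifications in place the conclusion is immediate: Theorem~\ref{th self adj E-L} asserts that the equation of variation is self-adjoint in the sense of Definition~\ref{def self adj}, and the self-adjointness form \eqref{self adj}, written on $\mathbb{T}=h\mathbb{Z}$ with $u^{\sigma}(s)=u(s+h)$ and the delta integral expanded as a sum, is precisely \eqref{self adj hZ}. I expect no genuine mathematical obstacle in this argument; the only point requiring care is purely bookkeeping --- matching the summation ranges ($k$ from $t_{0}/h$ to $t/h-1$) with the definition of the delta integral on $h\mathbb{Z}$, and making sure the forward shift hidden in $u^{\sigma}$ is correctly recorded as $u(kh+h)$ in \eqref{self adj hZ}. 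So the ``hard part'', such as it is, is notational alignment rather than anything substantive, and one closes simply by invoking Theorem~\ref{th self adj E-L} with $\mathbb{T}=h\mathbb{Z}$.
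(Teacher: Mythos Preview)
Your proposal is correct and takes exactly the same approach as the paper, which simply states that the corollary follows from Theorem~\ref{th self adj E-L} with $\mathbb{T}=h\mathbb{Z}$. Your added detail about identifying $\sigma$, $\mu$, the delta derivative, and the delta integral on $h\mathbb{Z}$ is just the explicit bookkeeping behind that one-line specialization.
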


\begin{proof}
Follows from Theorem~\ref{th self adj E-L}
with $\mathbb{T}=h\mathbb{Z}$.
\end{proof}

Finally, let us consider the particular case of Theorem~\ref{th self adj E-L}
when $\mathbb{T}=\overline{q^{\mathbb{Z}}}=q^{\mathbb{Z}}\cup\left\{0\right\}$,
where $q^{\mathbb{Z}}=\left\{q^{k}: k\in\mathbb{Z}, \, q>1\right\}$. In this case operator
$[\cdot]_{\mathbb{T}}$ of \eqref{notation 1} has the form
$[y]_{q^{\mathbb{Z}}}(t)=(t,y(qt),\Delta_{q} y(t))=:[y]_{q}(t)$, where
$$
\Delta_{q}y(t)=\frac{y(qt)-y(t)}{(q-1)t}.
$$
For $\mathbb{T}=\overline{q^{\mathbb{Z}}}$, $q>1$,
condition \eqref{self adj} can be written as
\begin{equation}
\label{self adj qZ}
p(t)\Delta_{q}u(t)+ (q-1)\sum\limits_{s\in [t_{0},t) \cap \mathbb{T}}
s r(s) u(qs) = const
\end{equation}
(cf., e.g., \cite{Rahmat}), where we use notation $r(t)$ instead of $q(t)$ in order
to avoid confusion between the $q=const$ that defines the time scale
and function $q(t)$ of \eqref{self adj}.

\begin{corollary}
\label{cor qZ}
If a given $q$-equation
$$
H(t,y(qt),\Delta_{q} y(t))+(q-1)\sum\limits_{s\in [t_{0},t)
\cap\mathbb{T}}sG(s,y(qs),\Delta_{q}y(s))=const,
$$
$q>1$, is to be the Euler--Lagrange equation of a variational problem
\begin{equation*}
\mathcal{I}(y)=(q-1)\sum\limits_{t\in [t_{0},t_{1})
\cap\mathbb{T}}tL(t,y(qt),\Delta_{q}y(t)),
\end{equation*}
$t_{0}, t_{1}\in \overline{q^{\mathbb{Z}}}$, then its equation of variation
\begin{multline*}
\partial_{2}H[u]_{q}(t)u(qt)+\partial_{3}H[u]_{q}(t)\Delta_{q}u(t)\\
+(q-1)\sum\limits_{s\in [t_{0},t)\cap\mathbb{T}}s\left(
\partial_{2}G[u]_{q}(s)u(qs)+\partial_{3}G[u]_{q}(s)\Delta_{q}u(s)  \right)=0
\end{multline*}
is self-adjoint, in the sense of Definition~\ref{def self adj}
with \eqref{self adj} given by \eqref{self adj qZ}.
\end{corollary}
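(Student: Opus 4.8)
The plan is to derive Corollary~\ref{cor qZ} as the specialization of Theorem~\ref{th self adj E-L} to the time scale $\mathbb{T}=\overline{q^{\mathbb{Z}}}$, $q>1$. First I would note that on this time scale the forward jump operator is $\sigma(t)=qt$ and the graininess is $\mu(t)=(q-1)t$, so that $y^{\sigma}(t)=y(qt)$ and the delta derivative coincides with the Jackson $q$-derivative $y^{\Delta}(t)=\Delta_{q}y(t)=\frac{y(qt)-y(t)}{(q-1)t}$. Hence the abstract operator $[y]_{\mathbb{T}}(t)=(t,y^{\sigma}(t),y^{\Delta}(t))$ of \eqref{notation 1} takes precisely the form $[y]_{q}(t)=(t,y(qt),\Delta_{q}y(t))$ declared before the statement. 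Likewise, using the formula $\int_{a}^{b}f(t)\Delta t=(q-1)\sum_{t\in[a,b)\cap\mathbb{T}}tf(t)$ recorded in the Example of Section~\ref{sec:Preliminairies}, every delta integral $\int_{t_{0}}^{t}(\cdot)\Delta s$ appearing in \eqref{eq:mEL} and \eqref{eq of var} rewrites as $(q-1)\sum_{s\in[t_{0},t)\cap\mathbb{T}}s(\cdot)$.

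Next I would apply this dictionary term by term. The given integro-differential equation \eqref{eq:mEL}, namely $H(t,y^{\sigma}(t),y^{\Delta}(t))+\int_{t_{0}}^{t}G(s,y^{\sigma}(s),y^{\Delta}(s))\Delta s=const$, becomes exactly the $q$-equation
$$
H(t,y(qt),\Delta_{q}y(t))+(q-1)\sum_{s\in[t_{0},t)\cap\mathbb{T}}sG(s,y(qs),\Delta_{q}y(s))=const
$$
in the statement; the associated variational functional \eqref{var funct} becomes $\mathcal{I}(y)=(q-1)\sum_{t\in[t_{0},t_{1})\cap\mathbb{T}}tL(t,y(qt),\Delta_{q}y(t))$; and the equation of variation \eqref{eq of var}, upon substituting $u^{\sigma}(s)=u(qs)$, $u^{\Delta}(s)=\Delta_{q}u(s)$ and converting the integral to the $q$-sum, becomes the displayed equation of variation in the corollary. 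Finally, the self-adjoint form \eqref{self adj}, written out on $\overline{q^{\mathbb{Z}}}$ with the same dictionary and with $q(t)$ renamed $r(t)$ to avoid clash with the constant $q$, is precisely \eqref{self adj qZ}.

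With these identifications in place, Theorem~\ref{th self adj E-L} applied with $\mathbb{T}=\overline{q^{\mathbb{Z}}}$ says directly that if the $q$-equation is an Euler--Lagrange equation then its equation of variation is self-adjoint in the sense of Definition~\ref{def self adj}, which, after translating \eqref{self adj} via the dictionary, is self-adjointness with respect to \eqref{self adj qZ}. This is exactly the assertion of the corollary, so the proof is complete. There is essentially no real obstacle here: the only point requiring a little care is bookkeeping — confirming that the endpoints $t_{0},t_{1}\in\overline{q^{\mathbb{Z}}}$ and the half-open index sets $[t_{0},t)\cap\mathbb{T}$ match the convention in the delta-integral formula, and that the renaming $q\mapsto r$ for the coefficient function is consistently carried through \eqref{self adj}. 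I would state the proof in one or two sentences, mirroring the proofs of Corollaries~\ref{cor R} and \ref{cor hZ}:

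\begin{proof}
Follows from Theorem~\ref{th self adj E-L} with $\mathbb{T}=\overline{q^{\mathbb{Z}}}$, $q>1$, using $\sigma(t)=qt$, $\mu(t)=(q-1)t$, $y^{\Delta}=\Delta_{q}y$, and $\int_{t_{0}}^{t}f(s)\Delta s=(q-1)\sum_{s\in[t_{0},t)\cap\mathbb{T}}sf(s)$, so that \eqref{self adj} takes the form \eqref{self adj qZ}.
\end{proof}
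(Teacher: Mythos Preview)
Your proposal is correct and follows exactly the same approach as the paper, which proves the corollary in a single line by choosing $\mathbb{T}=\overline{q^{\mathbb{Z}}}$ in Theorem~\ref{th self adj E-L}. Your version simply spells out the dictionary ($\sigma(t)=qt$, $y^{\Delta}=\Delta_{q}y$, the $q$-integral formula, and the renaming $q(t)\to r(t)$) that the paper leaves implicit.
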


\begin{proof}
Choose $\mathbb{T}=\overline{q^{\mathbb{Z}}}$ in Theorem~\ref{th self adj E-L}.
\end{proof}

The reader interested in the study of Euler--Lagrange equations for problems
of the $q$-variational calculus is referred
to \cite{FerreiraTorres,MyID:266,MR2966852} and references therein.


\section{Discussion}
\label{final remarks}

In an arbitrary time scale $\mathbb{T}$, it is easy to show equivalence between
the integro-differential equation \eqref{integro differ} and the second order differential
equation \eqref{eq 9} below (Proposition~\ref{prop 1}). However, when we consider equations
of variations of them, we notice that it is impossible to prove an equivalence between
them in an arbitrary time scale. This impossibility is true even in the discrete time scale
$\mathbb{Z}$. The main reason is the lack of chain rule on time scales
(\cite[Example~1.85]{BohnerDEOTS}). However, in $\mathbb{T}=\mathbb{R}$
we can present this equivalence (Proposition~\ref{prop}).

\begin{proposition}
\label{prop 1}
The integro-differential equation \eqref{integro differ}
is equivalent to the second order delta differential equation
\begin{equation}
\label{eq 9}
W(t,y^{\sigma}(t), y^{\Delta}(t), y^{\Delta\Delta}(t))=0.
\end{equation}

\end{proposition}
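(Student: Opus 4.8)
The plan is to prove the equivalence constructively: equation \eqref{eq 9} is obtained from \eqref{integro differ} by delta-differentiation, and \eqref{integro differ} is recovered from \eqref{eq 9} by delta-integration, the function $W$ being defined precisely so that these two operations are mutually inverse.

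\textbf{Forward direction.} Suppose $y$ is a solution of \eqref{integro differ} for which $y^{\Delta}$ is itself delta differentiable (as is needed for \eqref{eq 9} to make sense). Since $G[y]_{\mathbb{T}}$ is rd-continuous along $y$, the map $t\mapsto\int_{t_{0}}^{t}G[y]_{\mathbb{T}}(s)\Delta s$ is delta differentiable with derivative $G[y]_{\mathbb{T}}(t)$; as the right-hand side of \eqref{integro differ} is constant, it follows that $t\mapsto H[y]_{\mathbb{T}}(t)$ is delta differentiable and, applying $\Delta$ to both sides of \eqref{integro differ},
\[
\bigl(H[y]_{\mathbb{T}}\bigr)^{\Delta}(t)+G[y]_{\mathbb{T}}(t)=0 .
\]
It remains to check that the left-hand side is a genuine function of $(t,y^{\sigma}(t),y^{\Delta}(t),y^{\Delta\Delta}(t))$ only. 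The term $G[y]_{\mathbb{T}}(t)=G(t,y^{\sigma}(t),y^{\Delta}(t))$ already has this form. For $\bigl(H[y]_{\mathbb{T}}\bigr)^{\Delta}(t)$, at a right-scattered $t$ Theorem~\ref{rozniczka delta} gives it as $\mu(t)^{-1}\bigl(H[y]_{\mathbb{T}}^{\sigma}(t)-H[y]_{\mathbb{T}}(t)\bigr)$, and applying Theorem~\ref{differ} to $y$ and to $y^{\Delta}$ one writes $y^{\Delta}(\sigma(t))=y^{\Delta}(t)+\mu(t)y^{\Delta\Delta}(t)$ and $y^{\sigma}(\sigma(t))=y^{\sigma}(t)+\mu(\sigma(t))\bigl(y^{\Delta}(t)+\mu(t)y^{\Delta\Delta}(t)\bigr)$, so that $H[y]_{\mathbb{T}}^{\sigma}(t)=H\bigl(\sigma(t),y^{\sigma}(\sigma(t)),y^{\Delta}(\sigma(t))\bigr)$ depends only on $t,y^{\sigma}(t),y^{\Delta}(t),y^{\Delta\Delta}(t)$; at a right-dense $t$, the $C_{rd}$-regularity of $\partial_{2}H$ and $\partial_{3}H$ along $y$ assumed in Definition~\ref{def 2} yields the usual expression, which again involves at most $y^{\Delta\Delta}$. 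We may therefore define
\[
W(t,y^{\sigma}(t),y^{\Delta}(t),y^{\Delta\Delta}(t)):=\bigl(H[y]_{\mathbb{T}}\bigr)^{\Delta}(t)+G[y]_{\mathbb{T}}(t),
\]
and then $y$ satisfies \eqref{eq 9}.

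\textbf{Converse direction.} With $W$ as just defined, equation \eqref{eq 9} reads $\bigl(H[y]_{\mathbb{T}}\bigr)^{\Delta}(t)+G[y]_{\mathbb{T}}(t)=0$. Delta-integrating from $t_{0}$ to $t$ and using $\int_{t_{0}}^{t}\bigl(H[y]_{\mathbb{T}}\bigr)^{\Delta}(s)\Delta s=H[y]_{\mathbb{T}}(t)-H[y]_{\mathbb{T}}(t_{0})$, we obtain
\[
H[y]_{\mathbb{T}}(t)+\int\limits_{t_{0}}^{t}G[y]_{\mathbb{T}}(s)\Delta s=H[y]_{\mathbb{T}}(t_{0})=const,
\]
which is \eqref{integro differ}. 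Hence the two equations are equivalent.

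\textbf{Main obstacle.} The only delicate point is the one flagged above: verifying that $\bigl(H[y]_{\mathbb{T}}\bigr)^{\Delta}$ is, pointwise, a function of $t,y^{\sigma}(t),y^{\Delta}(t),y^{\Delta\Delta}(t)$ without appealing to a chain rule, which is unavailable on a general time scale. The right-scattered case is immediate from Theorems~\ref{rozniczka delta} and~\ref{differ}; the right-dense case (and points where scattered points accumulate) must be handled through the differentiability and $C_{rd}$-hypotheses of Definition~\ref{def 2}, and one must be careful to distinguish $\bigl(y^{\sigma}\bigr)^{\Delta}$ from $y^{\Delta\Delta}$. This is entirely consistent with the paper's subsequent remark that, precisely because the chain rule fails, no analogous equivalence survives at the level of the equations of variation.
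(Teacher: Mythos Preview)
Your proof is correct and rests on the same identity the paper uses, namely $W\langle y\rangle_{\mathbb{T}}=(H[y]_{\mathbb{T}})^{\Delta}+G[y]_{\mathbb{T}}$, with delta-differentiation and delta-integration providing the two directions; the paper's argument is in fact briefer, writing only the passage from \eqref{eq 9} to \eqref{integro differ} via the decomposition $W=F+G$ with $F=H^{\Delta}$ and then integrating. Your additional verification that $(H[y]_{\mathbb{T}})^{\Delta}(t)$ depends only on $(t,y^{\sigma}(t),y^{\Delta}(t),y^{\Delta\Delta}(t))$ is a point the paper does not address here---and indeed its subsequent discussion concedes that the partial derivatives of $W$ cannot be extracted from this relation on a general time scale---so your caution there is well placed.
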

\begin{proof}
Let \eqref{eq 9} be a given second order differential equation.
We may write it as a sum of two components
\begin{equation}
\label{eq 23}
W\langle y\rangle_{\mathbb{T}} (t)=F\langle y\rangle_{\mathbb{T}} (t)+G[y]_{\mathbb{T}}(t)=0.
\end{equation}
Let $F\langle y\rangle_{\mathbb{T}} =H^{\Delta}[y]_{\mathbb{T}}$. Then,
\begin{equation}
\label{eq 14}
H^{\Delta}(t,y^{\sigma}(t),y^{\Delta}(t))+G(t,y^{\sigma}(t),y^{\Delta}(t))=0.
\end{equation}
Integrating both sides of equation \eqref{eq 14} from $t_{0}$ to $t$,
we obtain the integro-differential equation \eqref{integro differ}.
\end{proof}

Let $\mathbb{T}$ be a time scale such that $\mu$ is delta differentiable.
The equation of variation of a second order differential equation \eqref{eq 9} is given by
\begin{equation}
\label{eq var differ}
\partial_{4}W\langle u\rangle_{\mathbb{T}} (t)u^{\Delta\Delta}(t)
+\partial_{3}W\langle u\rangle_{\mathbb{T}} (t) u^{\Delta}(t)
+\partial_{2}W\langle u\rangle_{\mathbb{T}} (t) u^{\sigma}(t)=0.
\end{equation}
Equation~\eqref{eq var differ} is obtained by using
the method presented in Remark~\ref{remark 1}.

In an arbitrary time scale it is impossible to prove the equivalence between
the equation of variation \eqref{eq of var} and \eqref{eq var differ}. Indeed,
after differentiating both sides of equation \eqref{eq of var} and using the
product rule given by~Theorem \ref{tw:differpropdelta}, we have
\begin{multline}
\label{eq 15}
\partial_{2}H[u]_{\mathbb{T}}(t)u^{\sigma\Delta}(t)
+\partial_{2}H^{\Delta}[u]_{\mathbb{T}}(t)u^{\sigma\sigma}(t)
+\partial_{3}H[u]_{\mathbb{T}}(t)u^{\Delta\Delta}(t)\\
+\partial_{3}H^{\Delta}[u]_{\mathbb{T}}(t)u^{\Delta\sigma}(t)
+\partial_{2}G[u]_{\mathbb{T}}(t)u^{\sigma}(t)
+\partial_{3}G[u]_{\mathbb{T}}(t)u^{\Delta}(t)=0.
\end{multline}
The direct calculations
\begin{itemize}
\item $\partial_{2}H[u]_{\mathbb{T}}(t)u^{\sigma\Delta}(t)
=\partial_{2}H[u]_{\mathbb{T}}(t)(u^{\Delta}(t)+\mu^{\Delta}(t)u^{\Delta}(t)+\mu^{\sigma}(t)u^{\Delta\Delta}(t))$,

\item $\partial_{2}H^{\Delta}[u]_{\mathbb{T}}(t)u^{\sigma\sigma}(t)
=\partial_{2}H^{\Delta}[u]_{\mathbb{T}}(t)(u^{\sigma}(t)+\mu^{\sigma}(t)u^{\Delta}(t)
+\mu(t)\mu^{\sigma}(t)u^{\Delta\Delta}(t))$,

\item $\partial_{3}H^{\Delta}[u]_{\mathbb{T}}(t)u^{\Delta\sigma}(t)
=\partial_{3}H^{\Delta}[u]_{\mathbb{T}}(t)(u^{\Delta}(t)+\mu(t) u^{\Delta\Delta}(t))$,
\end{itemize}
allow us to write the equation \eqref{eq 15} in form
\begin{multline*}
\biggl[\mu^{\sigma}(t)\partial_{2}H[u]_{\mathbb{T}}(t)+\mu(t)\mu^{\sigma}(t)\partial_{2}H^{\Delta}[u]_{\mathbb{T}}(t)\\
+\partial_{3}H[u]_{\mathbb{T}}(t)+\mu(t)\partial_{3}H^{\Delta}[u]_{\mathbb{T}}(t)\biggr] u^{\Delta\Delta}(t)\\
+\biggl[\partial_{2}H[u]_{\mathbb{T}}(t)+(\mu(t)\partial_{2}H[u]_{\mathbb{T}}(t))^{\Delta}
+\partial_{3}H^{\Delta}[u]_{\mathbb{T}}(t)+\partial_{3}G[u]_{\mathbb{T}}(t)\biggr]u^{\Delta}(t)\\
+\biggl[\partial_{2}H^{\Delta}[u]_{\mathbb{T}}(t)+\partial_{2}G[u]_{\mathbb{T}}(t)\biggr] u^{\sigma}(t) =0,
\end{multline*}
that is, using Theorem~\ref{differ},
\begin{multline}
\label{eq 20}
u^{\Delta\Delta}(t)\left[\mu(t)\partial_{2}H[u]_{\mathbb{T}}(t)+\partial_{3}H[u]_{\mathbb{T}}(t)\right]^{\sigma}\\
+u^{\Delta}(t)\left[\partial_{2}H[u]_{\mathbb{T}}(t)+(\mu(t)\partial_{2}H[u]_{\mathbb{T}}(t))^{\Delta}
+\partial_{3}H^{\Delta}[u]_{\mathbb{T}}(t) + \partial_{3}G[u]_{\mathbb{T}}(t)\right]\\
+u^{\sigma}(t)\left[\partial_{2}H^{\Delta}[u]_{\mathbb{T}}(t)+\partial_{2}G[u]_{\mathbb{T}}(t)\right]=0.
\end{multline}
We are not able to prove that the coefficients of equation \eqref{eq 20} are the same as in \eqref{eq var differ},
respectively. This is due to the fact that we cannot find the partial derivatives of \eqref{eq 9}, that is,
$\partial_{4}W\langle u\rangle_{\mathbb{T}}(t)$, $\partial_{3}W\langle u\rangle_{\mathbb{T}}(t)$
and $\partial_{2}W\langle u\rangle_{\mathbb{T}}(t)$, from equation \eqref{eq 14}
because of lack of chain rule in an arbitrary time scale.
The equivalence, however, is true for $\mathbb{T}=\mathbb{R}$.

\begin{proposition}
\label{prop}
The equation of variation
\begin{equation}
\label{eq 22}
\partial_{2}H[u]_{\mathbb{R}}(t)u(t)+\partial_{3}H[u]_{\mathbb{R}}(t)u'(t)
+\int\limits_{t_{0}}^{t}\partial_{2}G[u]_{\mathbb{R}}(s)u(s)+\partial_{3}G[u]_{\mathbb{R}}(s)u'(s)ds=0
\end{equation}
is equivalent to the second order differential equation
\begin{equation}
\label{eq 21}
\partial_{4}W\langle u\rangle_{\mathbb{R}}(t)u''(t)
+\partial_{3}W\langle u\rangle_{\mathbb{R}}(t) u'(t)
+\partial_{2}W\langle u\rangle_{\mathbb{R}}(t)u(t)=0.
\end{equation}
\end{proposition}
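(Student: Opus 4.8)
The plan is to reduce everything to the classical case, where $\sigma(t)=t$, $\mu(t)\equiv0$, $u^{\sigma}=u$, $u^{\Delta}=u'$ and $u^{\Delta\Delta}=u''$, so that the obstruction met in the general derivation of \eqref{eq 20} disappears. First I would specialise \eqref{eq 20} to $\mathbb{T}=\mathbb{R}$: with $\mu\equiv0$ the term $(\mu(t)\partial_{2}H[u]_{\mathbb{T}}(t))^{\Delta}$ vanishes and the forward shift $[\cdot]^{\sigma}$ acts trivially, so \eqref{eq 20} becomes
\begin{equation*}
\partial_{3}H[u]_{\mathbb{R}}(t)\,u''(t)
+\bigl[\partial_{2}H[u]_{\mathbb{R}}(t)+\partial_{3}H^{\Delta}[u]_{\mathbb{R}}(t)+\partial_{3}G[u]_{\mathbb{R}}(t)\bigr]u'(t)
+\bigl[\partial_{2}H^{\Delta}[u]_{\mathbb{R}}(t)+\partial_{2}G[u]_{\mathbb{R}}(t)\bigr]u(t)=0.
\end{equation*}
Since for $\mathbb{T}=\mathbb{R}$ the equation of variation \eqref{eq of var} is exactly \eqref{eq 22}, and the passage from \eqref{eq of var} to \eqref{eq 20} consists only in differentiating with respect to $t$ (applying the fundamental theorem of calculus to the integral term and the ordinary product rule to the two boundary terms), this step is reversible: integrating the displayed equation from $t_{0}$ to $t$ reproduces \eqref{eq 22}, the integration constant being the value of the left-hand side of \eqref{eq 22} at $t=t_{0}$, which equals $0$. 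Hence \eqref{eq 22} is equivalent to the displayed second-order equation.

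It remains to identify the displayed equation with \eqref{eq 21}. By Proposition~\ref{prop 1} applied with $\mathbb{T}=\mathbb{R}$, equation \eqref{eq 9} is written as $W\langle y\rangle_{\mathbb{R}}(t)=H^{\Delta}[y]_{\mathbb{R}}(t)+G[y]_{\mathbb{R}}(t)$, that is,
\begin{equation*}
W(t,a,b,c)=\partial_{1}H(t,a,b)+b\,\partial_{2}H(t,a,b)+c\,\partial_{3}H(t,a,b)+G(t,a,b).
\end{equation*}
Differentiating this expression with respect to its second, third and fourth slots and evaluating along a curve $u$ (so that $a=u(t)$, $b=u'(t)$, $c=u''(t)$, and total $t$-derivatives of $\partial_{2}H$, $\partial_{3}H$ are recovered by the chain rule), one obtains
\begin{align*}
\partial_{4}W\langle u\rangle_{\mathbb{R}}(t)&=\partial_{3}H[u]_{\mathbb{R}}(t),\\
\partial_{3}W\langle u\rangle_{\mathbb{R}}(t)&=\partial_{2}H[u]_{\mathbb{R}}(t)+\partial_{3}H^{\Delta}[u]_{\mathbb{R}}(t)+\partial_{3}G[u]_{\mathbb{R}}(t),\\
\partial_{2}W\langle u\rangle_{\mathbb{R}}(t)&=\partial_{2}H^{\Delta}[u]_{\mathbb{R}}(t)+\partial_{2}G[u]_{\mathbb{R}}(t).
\end{align*}
Substituting these three identities into \eqref{eq 21} gives precisely the displayed second-order equation from the first paragraph; reading them from right to left turns that equation back into \eqref{eq 21}. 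Combined with the first paragraph, this yields the claimed equivalence of \eqref{eq 22} and \eqref{eq 21}.

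I expect the only real work to be the middle step: matching the slot-derivatives of the four-argument function $W$ against combinations of the three-argument functions $H,G$ and their total $t$-derivatives along $u$ — keeping in mind, for instance, that $\partial_{3}H^{\Delta}[u]_{\mathbb{R}}(t)$ denotes $\frac{d}{dt}\bigl[\partial_{3}H(t,u(t),u'(t))\bigr]=\partial_{1}\partial_{3}H[u]_{\mathbb{R}}(t)+u'(t)\,\partial_{2}\partial_{3}H[u]_{\mathbb{R}}(t)+u''(t)\,\partial_{3}\partial_{3}H[u]_{\mathbb{R}}(t)$. It is exactly this chain-rule expansion of $\partial_{2}H^{\Delta}$ and $\partial_{3}H^{\Delta}$ that has no analogue on a general time scale, which is why the statement is confined to $\mathbb{T}=\mathbb{R}$; the equality of the mixed second derivatives of $H$ needed to close the identities is available for free since $H\in C^{2}$, just as Schwarz's theorem is used in the proof of Theorem~\ref{th self adj E-L}. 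Here $H$ and $G$ are assumed twice continuously differentiable, so that $W$ in \eqref{eq 9} is $C^{1}$ and may be differentiated.
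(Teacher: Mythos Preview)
Your proof is correct and follows essentially the same route as the paper's: differentiate \eqref{eq 22} to obtain a second-order equation, compute the slot-derivatives $\partial_{2}W,\partial_{3}W,\partial_{4}W$ via the chain rule from the decomposition $W=\frac{d}{dt}H+G$, and match coefficients. Your treatment is in fact slightly more careful than the paper's on two points: you give an explicit four-variable formula $W(t,a,b,c)=\partial_{1}H+b\,\partial_{2}H+c\,\partial_{3}H+G$ before differentiating in each slot (the paper differentiates the curve-dependent expression $\frac{d}{dt}H[u]+G[u]$ directly, which is notationally looser), and you explicitly argue the reverse implication by integration, whereas the paper only exhibits the forward direction and declares equivalence.
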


\begin{proof}
We show that coefficients of equations \eqref{eq 22} and \eqref{eq 21} are the same, respectively.
Let $\mathbb{T}=\mathbb{R}$. From equation \eqref{eq 23} and relation
$F\langle u\rangle_{\mathbb{R}} =\frac{d}{dt}H[u]_{\mathbb{R}}$ we have
$$
W(t,u(t),u'(t),u''(t))=\frac{d}{dt}H(t,u(t),u'(t))+G(t,u(t),u'(t)).
$$
Using notation \eqref{notation 1} and chain rule (that is valid for $\mathbb{T}=\mathbb{R}$ only)
we can calculate the partial derivatives:
\begin{itemize}
\item $\partial_{2}W\langle u\rangle_{\mathbb{R}}(t)
=\frac{d}{dt}\partial_{2}H[u]_{\mathbb{R}}(t)+\partial_{2}G[u]_{\mathbb{R}}(t)$,

\item $\partial_{3}W\langle u\rangle_{\mathbb{R}}(t)=\partial_{2}H[u]_{\mathbb{R}}(t)
+\frac{d}{dt}\partial_{3}H[u]_{\mathbb{R}}(t)+\partial_{3}G[u]_{\mathbb{R}}(t)$,

\item $\partial_{4}W\langle u\rangle_{\mathbb{R}}(t)=\partial_{3}H[u]_{\mathbb{R}}(t)$.
\end{itemize}
After differentiation both sides of equation \eqref{eq 22} we obtain
\begin{multline*}
\partial_{3}H[u]_{\mathbb{R}}(t) u''(t)
+\left(\partial_{2}H[u]_{\mathbb{R}}(t)
+\frac{d}{dt}\partial_{3}H[u]_{\mathbb{R}}(t)+\partial_{3}G[u]_{\mathbb{R}}(t)\right) u'(t)\\
+\left(\frac{d}{dt}\partial_{2}H[u]_{\mathbb{R}}(t)+\partial_{2}G[u]_{\mathbb{R}}(t)\right) u(t)=0.
\end{multline*}
Hence, the intended equivalence is proved.
\end{proof}

Proposition~\ref{prop} allows us to obtain the classical result of \cite[Theorem II]{Davis}
as a corollary of our Theorem~\ref{th self adj E-L}. The absence of a chain rule on time scales
(even for $\mathbb{T}=\mathbb{Z}$) implies that the classical approach of \cite{Davis}
fails on time scales. This is the reason why here we introduced a completely different approach
to the subject based on the integro-differential form.
The case $\mathbb{T}=\mathbb{Z}$ was recently investigated in \cite{Helmholtz}.
However, similarly to \cite{Davis}, the approach of \cite{Helmholtz}
is based on the differential form and cannot be extended to general time scales.


\section*{Acknowledgments}

This work was partially supported by Portuguese funds through the
\emph{Center for Research and Development in Mathematics and Applications} (CIDMA),
and \emph{The Portuguese Foundation for Science and Technology} (FCT),
within project PEst-OE/MAT/UI4106/2014.
Dryl was also supported by FCT through the Ph.D. fellowship
SFRH/BD/51163/2010; Torres by FCT within project OCHERA,
PTDC/EEI-AUT/1450/2012, co-financed by FEDER under POFC-QREN
with COMPETE reference FCOMP-01-0124-FEDER-028894.

The authors are very grateful to two anonymous referees
for valuable remarks and comments, which
significantly contributed to the quality of the paper.



\end{document}